\documentclass[12pt,reqno]{amsart}

\usepackage[margin=1.25in]{geometry}
\usepackage[utf8]{inputenc}
\usepackage[T1]{fontenc}
\usepackage{amsmath,amssymb,amsfonts,mathtools}
\usepackage{enumitem}
\usepackage[colorlinks=true,linkcolor=blue,citecolor=blue,urlcolor=blue]{hyperref}
\usepackage[nameinlink,noabbrev]{cleveref}
\usepackage{bbm}

\newtheorem{thm}{Theorem}[section]
\newtheorem{lem}[thm]{Lemma}
\newtheorem{cor}[thm]{Corollary}
\newtheorem{prop}[thm]{Proposition}

\theoremstyle{definition}

\theoremstyle{remark}

\crefname{thm}{Theorem}{Theorems}
\crefname{lem}{Lemma}{Lemmas}
\crefname{cor}{Corollary}{Corollaries}
\crefname{prop}{Proposition}{Propositions}
\crefname{claim}{Claim}{Claims}
\crefname{defn}{Definition}{Definitions}
\crefname{notation}{Notation}{Notations}
\crefname{remark}{Remark}{Remarks}

\numberwithin{equation}{section}

\newcommand{\R}{\mathbb{R}}
\newcommand{\Sph}{\mathbb{S}}
\newcommand{\Sym}{\mathcal{S}}
\newcommand{\diver}{\operatorname{div}}

\newcommand{\eps}{\varepsilon}
\newcommand{\res}{\mathop{\hbox{\vrule height 7pt width .5pt depth 0pt
			\vrule height .5pt width 6pt depth 0pt}}\nolimits}

\title{Bellman Equations with Sub-Lipschitz Hessians}
\author{Xavier Fern\'andez-Real}
\address{EPFL SB, Station 8, 1015 Lausanne, Switzerland}
\email{xavier.fernandez-real@epfl.ch}
\thanks{The author was supported by the Swiss National Science Foundation (SNF grant PZ00P2\_208930), by the Swiss State Secretariat for Education, Research and Innovation (SERI) under contract number MB22.00034, and by the AEI project PID2024-156429NB-I00 (Spain).}
\date{}

\subjclass[2020]{35J60, 35B65, 35R35}
\keywords{Fully nonlinear elliptic equation, Bellman equation, Evans--Krylov theorem, Hessian regularity, transmission problem, homogeneous solution}

\begin{document}

\begin{abstract}
We construct homogeneous solutions with non-Lipschitz Hessian for finite, constant-coefficient Bellman equations.  First, for every $\sigma\in(0,1)$, we find two uniformly elliptic matrices $A_1,A_2\in\Sym^4$ and a nonzero $(2+\sigma)$-homogeneous solution $u$ of
\[\max\bigl\{{\rm tr}\,(A_1D^2u),{\rm tr}\,(A_2D^2u)\bigr\}=0 \qquad\text{in }\R^4.\]
 Second, in $\R^2$ we construct three matrices satisfying ${\rm Id}_2\leq A_j\leq3{\rm Id}_2$ for which the corresponding Bellman equation admits a homogeneous solution with a non-Lipschitz Hessian.

 In particular, solutions to convex fully nonlinear uniformly elliptic equations are not in $C^{2,1}$, and not even in $C^{2, 1-\eps}$ for $\eps > 0$ small.
\end{abstract}

\maketitle

\section{Introduction}\label{sec:introduction}

\subsection{The regularity problem}\label{subsec:regularity-problem}

One of the central questions in elliptic PDE is how much regularity is forced by the equation itself.  Consider
\[F(D^2u)=0 \qquad\text{in }B_1\subset\R^n,\]
where $\Sym^n$ denotes the space of real symmetric $n\times n$ matrices and $F:\Sym^n\to\R$ is uniformly elliptic.  Thus, for some $0<\lambda\leq\Lambda$,
\[\lambda{\rm tr}\,(N)\leq F(M+N)-F(M)\leq\Lambda{\rm tr}\,(N)\]
whenever $M\in\Sym^n$ and $N\geq0$.

When $F$ is convex or concave, the Evans--Krylov theorem gives
\[u\in C^{2,\alpha_0}_{\rm loc}, \qquad \alpha_0=\alpha_0(n,\lambda,\Lambda)>0;\]
see \cite{Evans1982,Krylov1983,CaffarelliCabre1995,FernandezRealRosOton2022}.

The regularity of the operator suggests a natural scale beyond this estimate.  If, in addition, $F\in C^{1,\gamma}$ for some $\gamma\in(0,1)$, then Evans--Krylov followed by Schauder theory gives
$u\in C^{3,\gamma}_{\rm loc}$;
see \cite{FernandezRealRosOton2022}.  Still assuming that $F$ is convex or concave, if $F\in C^1$, then combining Evans--Krylov with the $W^{2,p}$ estimates for the equations satisfied by incremental quotients yields
\[u\in W^{3,p}_{\rm loc}\quad\text{for every }1<p<\infty,
\qquad\text{hence}\qquad
u\in C^{2,\beta}_{\rm loc}\quad\text{for every }0<\beta<1;
\]
see \cite{FernandezRealRosOton2022,GilbargTrudinger2001}.  The constants in this $C^1$ conclusion depend on the modulus of continuity of $DF$ and deteriorate as $\beta\uparrow1$; thus they do not pass uniformly to the Lipschitz class.  Since uniform ellipticity already forces $F$ to be Lipschitz, this leads to a natural endpoint question for convex operators: does Lipschitz regularity of $F$ force $u\in C^{2,1}_{\rm loc}$, or at least $u\in C^{2,1-\varepsilon}_{\rm loc}$ for every $\varepsilon\in(0,1)$?

A basic class of nonsmooth operators is given by Bellman equations,
\[F(M)=\max_{a\in\mathcal A}{\rm tr}\,(A_aM), \qquad \lambda {\rm Id}_n\leq A_a\leq\Lambda {\rm Id}_n.\]
They arise naturally in dynamic programming and stochastic control; see, for example, \cite{Brezis1978,BrezisEvans1979} and \cite[Appendix~C]{FernandezRealRosOton2022}.  We focus here on a finite family of constant matrices.  Already for two operators the equation reads
\[\max\{L_1u,L_2u\}=0, \qquad L_i u={\rm tr}\,(A_iD^2u).\]
It is linear in each region where one of the two operators is active, but the interface where the active operator changes is determined by the solution. Thus, this apparently elementary equation contains a transmission or free boundary structure.

The two-operator equation was studied through variational inequalities by Br\'ezis and Br\'ezis--Evans \cite{Brezis1978,BrezisEvans1979}.  They asked whether smooth data force $W^{3,p}$ regularity for every finite $p$, or $C^{2,\alpha_0}$ regularity for every $0<\alpha_0<1$, and noted that $C^3$ regularity may fail (see \cite[Open problem 1]{Brezis1978} or \cite[Remark 4.2]{BrezisEvans1979}).

In the plane, Caffarelli, De Silva, and Savin settled the endpoint question in the positive for two constant-coefficient operators.  They proved
\[\|u\|_{C^{2,1}(B_{1/2})}\leq C\|u\|_{L^\infty(B_1)}, \qquad C=C(\lambda,\Lambda),\]
for the maximum of two operators in $\R^2$ \cite[Theorem~1.2]{CaffarelliDeSilvaSavin2018}.  Thus, the endpoint expectation is valid in this particular elliptic case.

For comparison, the analogous endpoint can fail at the parabolic level.  Caffarelli and Stefanelli constructed locally bounded solutions of
$u_t=\max\left\{\Delta u,\frac12\Delta u\right\}$
using radial self-similar profiles, for which $u_t$ is not Lipschitz (so solutions fail to be $C^{2,1}$ in the parabolic scale) \cite{CaffarelliStefanelli2008}.  Since the spatial operators are proportional, this phenomenon is intrinsically parabolic and does not answer the elliptic question.

In higher dimensions, a modern formulation of the elliptic problem appears in \cite[Section~4.5]{FernandezRealRosOton2022}:
\[\text{For} \  \max\{L_1u,L_2u\}=0 \  \text{in }B_1\subset\R^n,\ n\geq3,\,  \text{must every solution belong to }C^{2,1}_{\rm loc}?\]
More broadly, the optimal regularity of nonsmooth Bellman equations (or general convex fully nonlinear uniformly elliptic operators) remains a central open question.  Even for a finite family of operators, the optimal exponent in the $C^{2,\alpha}$ estimate is not known in general; see \cite{Savin2025}, as well as \cite{RosOtonTorresLatorreWeidner2025, Goffi2026}.

The previous discussion leaves two immediate questions for finite Bellman equations: whether the planar endpoint persists for larger families, and whether the two-operator endpoint persists in higher dimensions.  More generally, one may ask what regularity is forced by a convex Lipschitz operator.  The results below show that neither of these finite-family extensions persists: three operators already allow a non-Lipschitz Hessian in dimension 2, while in dimension 4 two operators admit solutions whose exact Hessian H\"older exponent is any prescribed $\sigma\in(0,1)$.

The use of global homogeneous solutions is natural in this context.  Such solutions arise as blow-up limits, and Liouville or classification principles for them often lead to sharp local estimates; see, for instance, \cite{Huang2002}.  Their possible homogeneities are therefore directly tied to the optimal regularity problem.  Nevertheless, even for a Lipschitz convex operator in $\R^2$, the possible homogeneities and classification of global solutions with subcubic growth remain open.  We emphasize this point in \cite[Section~1.3]{ColomboFernandezRealRosOton2025} and obtain a classification under the additional structure of a two-dimensional thin-obstacle problem.  The examples below complement this picture by showing that noninteger homogeneities in the subcubic range genuinely occur for finite Bellman operators.

In particular, we show that solutions to convex fully nonlinear uniformly elliptic equations are not necessarily in $C^{2,1}$ or even $C^{2,1-}$ in general. 

\subsection{Main results}\label{subsec:main-results}

Our first result gives a negative answer to the two-operator question in dimension four.  In fact, every exponent between two and three occurs as the homogeneity of a global solution.

\begin{thm}\label{thm:main}
For every $\sigma\in(0,1)$, there are $A_1,A_2\in\mathcal{S}^4$ uniformly elliptic and a nonzero function $u\in C^{2,\sigma}_{\mathrm{loc}}(\R^4)$, homogeneous of degree $2+\sigma$, such that
\[\max\bigl\{{\rm tr}\,(A_1D^2u),{\rm tr}\,(A_2D^2u)\bigr\}=0 \qquad\text{in }\R^4.\]
In particular, $u\notin C^{2,\beta}$ at the origin for every $\beta>\sigma$.
\end{thm}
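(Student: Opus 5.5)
The plan is to look for $u$ with $O(2)\times O(2)$ symmetry and reduce the equation to an ODE on an interval. Write $\R^4=\R^2_{x'}\times\R^2_{x''}$, $r_1=|x'|$, $r_2=|x''|$, $r=|x|$, and $r_1=r\cos\varphi$, $r_2=r\sin\varphi$ with $\varphi\in[0,\pi/2]$. I would take
\[
A_i=\operatorname{diag}(a_i{\rm Id}_2,\,b_i{\rm Id}_2),\qquad i=1,2,
\]
with $a_i,b_i>0$. Then $L_iu=a_i\Delta_{x'}u+b_i\Delta_{x''}u$. I seek $u=r^{2+\sigma}g(\varphi)$. The partial Laplacians are then
\[
\Delta_{x'}u=r^{\sigma}P[g](\varphi),\qquad \Delta_{x''}u=r^{\sigma}Q[g](\varphi),
\]
where $P,Q$ are explicit second-order linear differential operators in $\varphi$ depending on $\sigma$. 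Each equation $L_iu=0$ becomes a linear second-order ODE $a_iP[g]+b_iQ[g]=0$. This ODE is singular at $\varphi=0$ and $\varphi=\pi/2$ (the axes $x''=0$ and $x'=0$) and has a one-parameter (up to scaling) family of solutions regular at each endpoint. I would write these regular solutions via hypergeometric functions, or at least via Frobenius expansions.

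The candidate is built by transmission gluing. Pick $\varphi_0\in(0,\pi/2)$. On $[0,\varphi_0]$ let $g=g_1$ be the regular solution of the $L_1$-ODE at $0$; on $[\varphi_0,\pi/2]$ let $g=c\,g_2$ be the regular solution of the $L_2$-ODE at $\pi/2$. For $u\in C^{2,\sigma}_{\rm loc}$ away from the origin and for the max-equation to hold, I want $g\in C^2$ across $\varphi_0$. Since $(a_1,b_1)$ and $(a_2,b_2)$ are linearly independent, this requires both $L_1u=L_2u=0$ on the interface cone, i.e. $P[g](\varphi_0)=Q[g](\varphi_0)=0$. Together with continuity of $g$ and $g'$, this makes $g''$ match automatically. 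Homogeneity then gives $u\in C^{2,\sigma}_{\rm loc}(\R^4)$ and the failure of $C^{2,\beta}$ at $0$ for $\beta>\sigma$, provided $D^2u\not\equiv0$ on the sphere; this last condition is clear, since $u$ is not a quadratic polynomial. The unknowns are $\varphi_0$, $c$, and the four ellipticity coefficients (modulo scaling). The matching conditions are $g_1=cg_2$, $g_1'=cg_2'$ and $P[g_1](\varphi_0)=0$ at $\varphi_0$, where the last one also forces $Q[g_1](\varphi_0)=0$ via the $L_1$-ODE. This is a small, underdetermined system, so I expect to solve it by a continuity or shooting argument in, say, $b_1/a_1$ and $b_2/a_2$ for each fixed $\sigma$.

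The remaining requirement, which I expect to be the main obstacle, is the sign condition. In the region $\{\varphi<\varphi_0\}$, where $L_1u=0$, we need $L_2u\le0$, and symmetrically $L_1u\le0$ for $\varphi>\varphi_0$. In region 1, $L_1u=0$ gives $Q=-(a_1/b_1)P$, hence
\[
L_2u=r^\sigma P\,\bigl(a_2-b_2a_1/b_1\bigr),
\]
so the condition reduces to a fixed sign of $P[g_1]$ on $(0,\varphi_0)$. The analogous condition holds for $Q$ (or $P$) on the other side. I would first choose the coefficients so that $\varphi_0$ is the first zero of $P[g_1]$; this zero is simple, so the sign is constant on $(0,\varphi_0)$. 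I would then orient the inequality $a_2/b_2\gtrless a_1/b_1$ accordingly, and verify by a Sturm-type or maximum-principle argument on the second interval that $P[g_2]$ does not change sign on $(\varphi_0,\pi/2)$. If the generic choice fails, I would try an asymptotic regime, e.g. $A_1$ near ${\rm Id}$ and $A_2$ strongly anisotropic. In that regime the ODEs become nearly explicit, and degree $2+\sigma$ can be tuned continuously to any $\sigma\in(0,1)$ by varying the anisotropy.
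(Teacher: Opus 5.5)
Your reduction is correct as far as it goes, and it is consistent with the paper. The paper's solution is itself $O(2)\times O(2)$-invariant of the form $r^{2+\sigma}g(\varphi)$, glued at $\varphi_0=\pi/4$, with $A_1,A_2$ obtained from each other by exchanging the two $\R^2$ factors. You are also right on three points: $C^2$ gluing forces $L_1u=L_2u=0$ on the interface; $C^1$ matching together with $P[g_1](\varphi_0)=0$ forces $g''$ to match; and the sign condition on $(0,\varphi_0)$ is the sign of $P[g_1]$.

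The gap is that you never prove the existence statement, which is the content of the theorem. Nothing shows that for a given $\sigma\in(0,1)$ there exist $b_2/a_2$ and $c$ with $g_1=cg_2$ and $g_1'=cg_2'$ at $\varphi_0$. Nor does anything show that $L_1u$ then has the correct sign on $(\varphi_0,\pi/2)$. Counting three equations against four unknowns is not an existence argument for a nonlinear system. Your shooting step never names a quantity whose sign change would produce a solution. The second-interval sign condition, and the claim that every $\sigma$ is reached, are left to an unspecified Sturm argument or asymptotic regime.

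The missing idea is the one you already use on the first interval, applied on both sides. Because $L_1$ and $L_2$ commute, on $\{\varphi<\varphi_0\}$ the function $L_2u$ is $L_1$-harmonic, $\sigma$-homogeneous, zero on the interface and of one sign. It is therefore a first Dirichlet eigenfunction of that cone, and likewise $L_1u$ on $\{\varphi>\varphi_0\}$ is one for $L_2$. So the conditions to impose are that each cone has characteristic exponent exactly $\sigma$ for its active operator, rather than a shooting on $(g,g')$.

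After the rescaling $x'\mapsto x'/\sqrt{a_i}$, $x''\mapsto x''/\sqrt{b_i}$, each condition becomes $\mu_1(\kappa)=\sigma(\sigma+2)$ for a cone $\{|x'|>\kappa|x''|\}$, up to swapping factors. The paper solves this with Lemma~\ref{lem:eigenvalue}, continuity and strict monotonicity of $\mu_1$, and $\mu_1(1)=8$. Explicitly, $b_1/a_1=\kappa(\sigma)^2\tan^2\varphi_0$ and $a_2/b_2=\kappa(\sigma)^2\cot^2\varphi_0$ work for every $\varphi_0$; this is the one-parameter family your count suggests.

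Gluing is then automatic. The paper extends the two positive profiles by zero and checks $L_2q_1=L_1q_2$ distributionally by equality of conormal densities (Proposition~\ref{prop:flux}; this uses the exchange symmetry at $\varphi_0=\pi/4$, and in general one profile is rescaled, since both densities are positive multiples of $t^{\sigma-1}$). It then inverts $L_1$ on homogeneous data, with Liouville giving $L_2v=q_2$ (Lemma~\ref{lem:lifting}). This yields $u\in C^{2,\sigma}_{\rm loc}$ with both signs built in.

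In your own ODE language the gluing also closes. At $\varphi_0$, both $(g_1,g_1',g_1'')$ and $(g_2,g_2',g_2'')$ lie in the kernel of the $2\times3$ coefficient matrix of $(P,Q)$. That matrix has rank two: the $g''$-coefficients $\sin^2\varphi_0$ and $\cos^2\varphi_0$ are both positive, while the $g'$-coefficients have opposite signs. Hence a single $c$ matches $g,g',g''$. The two signs are consistent because $L_2(L_1u)=L_1(L_2u)$ equates two interface measures whose signs, by Hopf, are those of $L_1u$ and $L_2u$.
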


In the solution constructed by the proof, the switching set is also explicit. After identifying $\R^4=\R^2\times\R^2$ and indexing the two operators as below,
\[\Sigma(u):=\{z\in\R^4:L_1u(z)=L_2u(z)\}=\{(x,y)\in\R^2\times\R^2:|x|=|y|\}.\]
In each of the two complementary cones exactly one operator attains the maximum, whereas both operators vanish on $\Sigma(u)$.  Thus, the switching set is a nonflat cone, smooth away from an isolated conical singularity at the origin.

Hence, already for two operators in dimension four, one cannot expect $C^{2,1}$ regularity, or even any $C^{2,\beta}$ regularity for a $\Lambda/\lambda$-ratio-independent value $\beta$: any prescribed $\sigma\in(0,1)$ occurs as the exact H\"older exponent of the Hessian at the origin.  Necessarily, by the Evans--Krylov theorem, the matrices depend on $\sigma$, and their ellipticity ratio diverges as $\sigma\downarrow0$.  The corresponding two-operator question in dimension three remains open.

\medskip

Our second result stays in the plane and instead adds a third operator.  Set
\[\sigma_*:=\frac{\arctan 2}{\arctan 3}\in(0,1),\qquad\text{so that}\quad \sigma_*\approx 0.886\ldots . \]

\begin{thm}\label{thm:three-operators}
There exist matrices $A_0,A_1,A_2\in\Sym^2$ satisfying
\[{\rm Id}_2\leq A_j\leq3{\rm Id}_2, \qquad j=0,1,2,\]
and a nonzero function $u\in C^{2,\sigma_*}_{\mathrm{loc}}(\R^2)$, homogeneous of degree $2+\sigma_*$, such that
\[\max_{j=0,1,2}{\rm tr}\,(A_jD^2u)=0 \qquad\text{in }\R^2.\]
In particular, $u\notin C^{2,\beta}$ at the origin for every $\beta>\sigma_*$.
\end{thm}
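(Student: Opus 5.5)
The plan is to build $u$ in polar form, $u=r^{2+\sigma}g(\theta)$, gluing finitely many angular sectors. In each sector $u$ solves one fixed linear equation ${\rm tr}\,(A_jD^2u)=0$ exactly, and the inequalities ${\rm tr}\,(A_iD^2u)\le0$ hold for the inactive indices.

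\textbf{Sectorwise solutions.} For a constant matrix $A_j=T_jT_j^{\mathsf T}$ with $T_j$ invertible, the linear change of variables $x\mapsto T_j^{-1}x$ turns $L_j$ into the Laplacian. Hence the $(2+\sigma)$-homogeneous solutions of $L_ju=0$ on a sector are exactly
\[
{\rm Re}\bigl(c_j\,w_j(x)^{2+\sigma}\bigr),\qquad w_j(x)=\text{the complex coordinate of }T_j^{-1}x,\quad c_j\in\mathbb C,
\]
taken on a branch of the power defined on that sector. Angles measured in the $w_j$-plane are distorted versions of Euclidean angles. This is where the numbers $\arctan 2$ and $\arctan 3$ should appear.

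\textbf{Choice of matrices.} I would take $A_0$ a multiple of the identity, and $A_1,A_2$ with eigenvalues in $[1,3]$ and principal axes rotated relative to each other. The aim is that the ray directions bounding the sectors have slopes $2$ and $3$ in suitable stretched coordinates.

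\textbf{Gluing and the exponent.} Across each interface ray I impose continuity of $u$, $Du$ and $D^2u$. Continuity of the Hessian is needed both for $u\in C^{2,\sigma}$ and so that the maximum is attained by both adjacent operators on the ray, with both vanishing there. Since $D^2u$ is homogeneous of degree $\sigma$ and each second derivative is $L_j$-harmonic in its own sector, these matching conditions form a linear system in the coefficients $c_j$. The determinant of that system vanishes only for special $\sigma$.

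I would exploit a reflection symmetry of the configuration so that only a fundamental domain (say two sectors) needs matching. The determinant condition should then reduce to an identity of the form
\[
\sigma\cdot\arctan 3=\arctan 2,
\]
which would pin $\sigma=\sigma_*$. Concretely, one second-derivative component, of the form ${\rm Re}(c\,w^{\sigma})$, has to vanish at two prescribed rays whose $w$-angles differ by $\arctan 3$ and $\arctan 2$ respectively. I would choose the matrices to make exactly this happen, and then solve the remaining linear conditions for the $c_j$.

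\textbf{Regularity conclusions.} Once the gluing works, $u\in C^{2,\sigma_*}_{\rm loc}$ follows because $u$ is piecewise real-analytic off the origin, has a continuous Hessian, and $D^2u$ is $\sigma_*$-homogeneous. Exact homogeneity of $D^2u$ of degree $\sigma_*$, with $D^2u\not\equiv0$, shows $u\notin C^{2,\beta}$ at $0$ for $\beta>\sigma_*$. Off the rays, $L_ju=0$ in the sectors is classical; on the rays it follows from Hessian continuity.

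\textbf{Main obstacle.} The hardest step is verifying the sign conditions ${\rm tr}\,(A_iD^2u)\le0$ in every sector where $A_i$ is inactive. There ${\rm tr}\,(A_iD^2u)=r^{\sigma}h_i(\theta)$ for an explicit trigonometric-type function $h_i$ that vanishes on the interface rays. I would first check the sign of $h_i$ at the interfaces via its derivative, using the jump in the normal third derivative, and then rule out interior zeros. To do so I would note that $h_i$ is, in the $w_j$ variable, a combination ${\rm Re}(c\,w_j^{\sigma})$ with $\sigma<1$; such a function has at most one zero on an angular interval of length less than $\pi/\sigma$, which can be checked against the sector widths. If the sign fails, the fallback is to adjust the rotation angles of $A_1,A_2$ within the ellipticity bounds $[1,3]$ while keeping the defining angle relation intact.
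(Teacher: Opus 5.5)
There is a genuine gap. Your outline is a sensible template: piecewise anisotropic complex powers, $C^2$ gluing across rays, and a sign check using the fact that each inactive trace is ${\rm Re}(c\,w_j^{\sigma})$ in the active sector, which is indeed how the inequalities end up being verified. But it does not contain the construction, and that construction is the whole content of the theorem. You never specify the matrices, the sectors, or the coefficients. The claim that the gluing condition ``should reduce to'' $\sigma\arctan 3=\arctan 2$ is read off from the definition of $\sigma_*$, not derived. And ``I would choose the matrices to make exactly this happen'' is precisely the step that has to be carried out.

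Worse, the one concrete design choice you make, $A_0$ a multiple of the identity, points away from what works. It rules out a threefold symmetry: a $2\pi/3$-rotation permuting the three matrices would force all of them to be multiples of ${\rm Id}$, i.e.\ a single Laplace equation with only integer homogeneities. So the interface angles are no longer fixed a priori and must be solved for, nonlinearly, together with $\sigma$ and the $c_j$. Nothing in your outline shows that the outcome is exactly $\sigma_*$ with all eigenvalues in $[1,3]$. Nor is this a harmless normalization. Conjugating the paper's example by $A_0^{-1/2}$ to make $A_0={\rm Id}$ turns $A_1$ into $\frac12\begin{pmatrix}1&-1\\ -1&5\end{pmatrix}$, whose eigenvalue ratio $(7+3\sqrt5)/2\approx 6.85$ exceeds $3$.

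The missing idea is to keep the full dihedral symmetry with an anisotropic $A_0$:
\begin{itemize}
\item Take $A_0={\rm diag}(3,1)$ and $A_j=R_{2j\pi/3}A_0R_{2j\pi/3}^{T}$.
\item Let $A_0$ be active on $\{|\theta|\le\pi/3\}$ with $u_0={\rm Re}\bigl((x_1/\sqrt3+ix_2)^{2+\sigma}\bigr)$. The coefficient is real, so $u_0$ is even in $x_2$. Then extend by rotations.
\item The interfaces are now the fixed rays $\theta=(2k+1)\pi/3$. Evenness and $2\pi/3$-periodicity of the angular profile match the value and second derivative automatically. The only gluing condition left is $\partial_\nu u_0=0$ on $\theta=\pi/3$.
\item That ray has angle $\arctan 3$ in the stretched variable, since $\sqrt3\tan(\pi/3)=3$. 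The Neumann condition becomes $\cos\bigl((1+\sigma)\arctan3\bigr)+\sin\bigl((1+\sigma)\arctan3\bigr)=0$, i.e.\ $(1+\sigma)\arctan 3=\tfrac{3\pi}{4}=\arctan2+\arctan3$. This is exactly $\sigma=\sigma_*$.
\item The sign check is then explicit. ${\rm tr}(A_1D^2u_0)$ and ${\rm tr}(A_2D^2u_0)$ are positive multiples of $-2\cos\psi\pm\sin\psi$, with $\psi=\sigma_*\Theta$ and $|\psi|\le\arctan 2$. Hence both are $\le 0$, and each vanishes exactly on its boundary ray.
\end{itemize}
Your closing regularity remarks are fine, but they only apply once such an explicit configuration is in hand.
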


The set where the active operator is not unique is exactly the $Y$-shaped profile
\[{\textstyle \bigcup}_{k=0}^2 \left\{r e^{i(2k+1)\pi/3}:r\geq0\right\}.\]
At every nonzero point of one of these rays, exactly two adjacent operators attain the value zero and the third one is strictly negative, while all three operators vanish at the origin.  Thus, the construction gives a $Y$-shaped triple junction of active Bellman regimes.

In particular, even at the fixed ellipticity ratio $\Lambda/\lambda=3$, both the planar $C^{2,1}$ endpoint and the weaker conclusion $C^{2,1-\varepsilon}$ for every $\varepsilon\in(0,1)$ fail as soon as a third operator is allowed.  Thus the number of operators is essential in the planar endpoint theorem: two operators give $C^{2,1}$ regularity, while three already allow a non-Lipschitz Hessian.  As far as we know, this is the first such construction for a finite constant-coefficient Bellman operator in the plane.

\subsection{The transmission viewpoint}\label{subsec:transmission-viewpoint}

The two-operator equation also admits a useful transmission formulation.  If $u$ solves
\[\max\{{\rm tr} (A_+ D^2u),{\rm tr} (A_- D^2u)\}=0 \qquad\text{and}\qquad q:=(L_+-L_-)u,\]
then $L_+q^+=L_-q^-,$ where $q^\pm = \max\{\pm q, 0\}$, or equivalently
\[\diver\bigl(A_+\nabla q^+-A_-\nabla q^-\bigr)=0.\]
This is a broken-conductivity equation with matrix coefficients.  Its Lipschitz theory and the structural assumptions it requires are studied in \cite{KimLeeShahgholian2017,Kim2021}.  Related sublinear cone profiles occur in anisotropic segregation: in dimensions $n\geq3$, Soave and Terracini constructed disjoint profiles of common homogeneity below one \cite{SoaveTerracini2023}.  In the symmetric constant-coefficient setting considered here, Section~\ref{sec:proofs} produces a profile of every homogeneity $\sigma\in(0,1)$, with the additional conormal compatibility needed to lift it to a Bellman solution; see Corollary~\ref{cor:broken-conductivity}.

\subsection{Ideas of the constructions}\label{subsec:ideas}

Let us first explain the four-dimensional example.  Write $\R^4=\R^2\times\R^2$.  Given $\sigma\in(0,1)$, we choose a symmetric cone whose spherical cross-section has first Dirichlet eigenvalue $\sigma(\sigma+2)$.  Its positive first eigenfunction generates a $\sigma$-homogeneous harmonic profile in the cone.

Two anisotropic copies of this profile are then placed in the complementary cones $\{|x|>|y|\}$ and $\{|y|>|x|\}$.  The two coefficient matrices are obtained from one another by exchanging the $\R^2$ factors.  This symmetry is the key point: it makes the conormal source measures of the two zero extensions agree exactly.  We thereby obtain a $\sigma$-homogeneous sign-changing function $q$ satisfying
\[L_+q^+=L_-q^- \qquad\text{in the sense of distributions}.\]

This compatibility allows us to lift $q$ by two derivatives.  A  homogeneous Poisson problem gives a $(2+\sigma)$-homogeneous function $v$ such that
\[L_-v=q^+, \qquad L_+v=q^-.\]
Since $q^+,q^-\geq0$ have disjoint supports, $\min\{L_-v,L_+v\}=0$.

The passage from a two-operator Bellman equation to a transmission problem is made explicit in \cite[Lemmas~2.1--2.2]{AnderssonMikayelyan2013} and \cite[Section~4]{CaffarelliDeSilvaSavin2018}; the lifting argument above runs this mechanism in the opposite direction.  Cone profiles of sublinear homogeneity also appear in anisotropic two-phase theory, notably \cite[Proposition~3.7]{SoaveTerracini2023}.  The broader cone and two-phase picture is related to the monotonicity methods of \cite{AltCaffarelliFriedman1984,CaffarelliJerisonKenig2002}, although no monotonicity formula is used here.  Related problems with discontinuous conductivities or free transmission were studied in \cite{AmaralTeixeira2015,KimLeeShahgholian2017,KimLeeShahgholian2019,Kim2021}.

The planar example is independent and more explicit.  We start from an anisotropically scaled complex power in a sector of opening $2\pi/3$ and extend it by rotations.  The exponent $\sigma_*$ is chosen so that the first derivatives match across the boundary rays, while a direct Hessian computation identifies the active operator in each sector.

Section~\ref{sec:proofs} contains the cone, transmission, and lifting construction and proves Theorem~\ref{thm:main}.  Section~\ref{sec:three-operators} proves Theorem~\ref{thm:three-operators}.

\section{Proof of Theorem~\ref{thm:main}}\label{sec:proofs}

\subsection{Lifting to a Bellman solution}\label{subsec:lifting}

The main result will be a consequence of the following abstract lifting statement, which converts broken quasilinear PDEs into solutions of a two-operator Bellman equation.

\begin{lem}\label{lem:lifting}
Let $\sigma\in (0, 1)$, and let
\[L_i w={\rm tr}\,(A_iD^2w), \qquad i=1,2,\]
be constant-coefficient uniformly elliptic operators on $\R^n$. Let $q_i\in C^{\sigma}_{\mathrm{loc}}(\R^n)$ be $\sigma$-homogeneous for $i = 1, 2$ and satisfying
\[L_2q_1=L_1q_2 \qquad\text{distributionally in $\R^n$}.\]
Then, there exists $v\in C^{2,\sigma}_{\mathrm{loc}}(\R^n)$, $(2+\sigma)$-homogeneous such that
\[L_1v=q_1, \qquad L_2v=q_2 \qquad\text{in }\R^n.\]
\end{lem}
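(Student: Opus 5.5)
Since $L_1$ and $L_2$ have constant coefficients they commute, so $q_1,q_2$ can be recovered from a single potential. I will build $v$ by solving one homogeneous Poisson problem for the composite fourth-order operator $L_1L_2$, or more simply by solving $L_1v=q_1$ and then checking that $L_2v=q_2$.

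\textbf{Step 1: the homogeneous lift.} I first find a $(2+\sigma)$-homogeneous $v\in C^{2,\sigma}_{\rm loc}$ with $L_1v=q_1$ distributionally. By a linear change of variables $x=A_1^{1/2}y$, the operator $L_1$ becomes the Laplacian, and $\tilde q_1(y)=q_1(A_1^{1/2}y)$ is still $\sigma$-homogeneous and $C^\sigma$. Write $\tilde q_1=r^\sigma g(\theta)$ with $g\in C^\sigma(\Sph^{n-1})$. A $(2+\sigma)$-homogeneous function $r^{2+\sigma}h(\theta)$ satisfies $\Delta(r^{2+\sigma}h)=r^\sigma\bigl(\Delta_{\Sph}h+(2+\sigma)(n+\sigma)h\bigr)$. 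So I need to solve
\[\Delta_{\Sph}h+(2+\sigma)(n+\sigma)h=g\quad\text{on }\Sph^{n-1}.\]
The spherical eigenvalues are $k(k+n-2)$. Since $2+\sigma\notin\mathbb Z$, the number $(2+\sigma)(n+\sigma)$ is not of this form, so the operator is invertible. Schauder theory on the sphere gives $h\in C^{2,\sigma}(\Sph^{n-1})$. Hence $\tilde v=r^{2+\sigma}h$ is $C^{2,\sigma}$ away from the origin. It is $C^{2,\sigma}$ at the origin by homogeneity: $D^2\tilde v$ is $\sigma$-homogeneous and $C^\sigma$ on the sphere, hence $C^\sigma$ globally. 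The equation holds across the origin because a $\sigma$-homogeneous function is locally integrable and the origin carries no mass, as a cutoff argument shows. Undoing the change of variables gives $v$.

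\textbf{Step 2: the second equation.} Set $w:=L_2v-q_2$. This is $\sigma$-homogeneous and $C^\sigma_{\rm loc}$, and distributionally
\[L_1w=L_2L_1v-L_1q_2=L_2q_1-L_1q_2=0.\]
So $w$ is $L_1$-harmonic in all of $\R^n$, hence smooth by Weyl's lemma. Being homogeneous of noninteger degree $\sigma\in(0,1)$ with sublinear growth, Liouville's theorem makes it constant. A nonzero constant is not $\sigma$-homogeneous, so $w\equiv0$.

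\textbf{Main obstacle.} The main point to check is the distributional meaning at the origin in Step 1. I need the homogeneous solution on $\R^n\setminus\{0\}$ to solve the equation in $\R^n$ without a Dirac-type error. This follows because any error would be supported at $\{0\}$, so it would be a combination of derivatives of $\delta$, which are homogeneous of degree $\le -n$. But $L_1v-q_1$ is homogeneous of degree $\sigma>-n$, so the error vanishes. The same degree comparison justifies the commutation $L_2L_1v=L_1L_2v$ in $\mathcal D'$.
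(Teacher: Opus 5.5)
Your proposal is correct and follows the paper's proof essentially step by step. Like the paper, you reduce $L_1$ to $\Delta$, solve the non-resonant spherical equation $\Delta_{\Sph^{n-1}}\psi+(2+\sigma)(n+\sigma)\psi=g$ via Schauder theory, extend homogeneously, and then show that $L_2v-q_2$ is a $\sigma$-homogeneous $L_1$-harmonic function, which Liouville's theorem forces to vanish.

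The point you flag as the main obstacle is in fact automatic. Once $v\in C^{2}(\R^n)$ with $D^2v(0)=0$, the equation holds classically everywhere and hence distributionally. Likewise, commuting constant-coefficient operators in $\mathcal D'$ needs no justification. Your homogeneity-degree argument is nonetheless valid.
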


\begin{proof}
Notice that if we define $v = L_1^{-1} q_1$, then $L_2 v = L_2 L_1^{-1} q_1 = L_1^{-1} L_2 q_1 = q_2$. So this is our desired candidate. Let us formalize it.

We first solve
\[L_1v=q_1\]
in the class of functions homogeneous of degree $2+\sigma$, where $q_1\in C^\sigma_{\rm loc}(\R^n)$ is $\sigma$-homogeneous (because $\sigma\in (0, 1)$). After a standard change of variables, we may assume $L_1 = \Delta$. Write
\[q_1(r\theta)=r^\sigma g(\theta)\]
and we are searching for $v$ with $\Delta v = q_1$ with
\[v(r\theta)=r^{2+\sigma}\psi(\theta).\]
The equation on the sphere is
\begin{equation}\label{eq:sphere-resolvent}\left[\Delta_{\Sph^{n-1}}+(2+\sigma)(\sigma+n)\right]\psi=g.\end{equation}
The spectrum of $-\Delta_{\Sph^{n-1}}$ is $\{\ell(\ell+n-2):\ell=0,1,2,\dots\}$, which never coincides with $(2+\sigma)(\sigma+n)$ for any integer $\ell\geq0$.  Hence the operator in \eqref{eq:sphere-resolvent} is invertible.  Standard Schauder theory (cf. \cite[Chapter~6]{GilbargTrudinger2001}) gives $\psi\in C^{2,\sigma}(\Sph^{n-1})$, and $\psi$ then extends to $v$ homogeneous of degree $2+\sigma$ with the same regularity.
Indeed, for $x\neq0$ one has $D^2v(x)=|x|^\sigma B(x/|x|)$ with $B\in C^\sigma(\Sph^{n-1})$.  Homogeneity gives $v=O(|x|^{2+\sigma})$, $Dv=O(|x|^{1+\sigma})$, and $D^2v=O(|x|^\sigma)$, so $v$ is $C^2$ at the origin with $D^2v(0)=0$.  The two cases $|x-y|\geq\frac12\max\{|x|,|y|\}$ and $|x-y|<\frac12\max\{|x|,|y|\}$ then give $|D^2v(x)-D^2v(y)|\leq C|x-y|^\sigma$; hence the extension is $C^{2,\sigma}$ also at the origin.
We have just found (after changing variables back) a $(2+\sigma)$-homogeneous $v\in C^{2,\sigma}_{\rm loc}(\R^n)$.

Define now
\[h:=L_2v-q_2.\]
We thus have, distributionally in $\R^n$,
\begin{align*}L_1h=L_1L_2v-L_1q_2=L_2L_1v-L_1q_2=L_2q_1-L_1q_2=0.\end{align*}
In particular, $h$ is smooth and $\sigma$-homogeneous.  By Liouville's theorem with growth \cite[Proposition~1.19]{FernandezRealRosOton2022}, $h$ is a polynomial of degree at most $\lfloor\sigma\rfloor=0$.  Hence $h$ is constant, and its $\sigma$-homogeneity yields $h\equiv0$, as we wanted.
\end{proof}

\subsection{A cone with prescribed characteristic exponent}\label{subsec:cone}
We now construct the homogeneous harmonic function of prescribed degree $\sigma\in(0,1)$ used in the proof.  Similar cone profiles appear in anisotropic two-phase theory; see \cite{CaffarelliDeSilvaSavin2018,SoaveTerracini2023}.  The cone constructed below has two crucial properties: its characteristic exponent can be made arbitrarily small, and it has a symmetry which helps match the two conormal source measures.

Let us denote points in $\R^4$ as
\[\R^4 \ni z = (x, y) \in \R^2\times \R^2.\]
Fix $\sigma\in(0,1)$.  For $0<\kappa\leq1$, define the cone
\[D_\kappa:=\{(x, y)\in\R^2\times\R^2:|x|>\kappa|y|\}\]
and its spherical link
\[\Omega_\kappa:=D_\kappa\cap\Sph^3.\]
Let $\mu_1(\kappa)>0$ be the first Dirichlet eigenvalue of $-\Delta_{\Sph^3}$ on $\Omega_\kappa$.  We first record its behavior as $\kappa\downarrow0$.

Use Hopf coordinates on $\Sph^3$
\[x=\omega \sin\theta,\qquad y=\eta \cos\theta,\qquad\text{where}\quad \theta\in [0, \pi/2],\ \omega, \eta \in \Sph^1,\]
so that
\[\Omega_\kappa=\{\theta>\theta_\kappa\},\qquad \theta_\kappa:=\arctan \kappa,\]
and the surface measure on $\Sph^3$ is
\begin{equation}\label{eq:hopf-metric}dS=\sin\theta\cos\theta\,d\theta\,d\omega\,d\eta.\end{equation}
We then have:

\begin{lem}\label{lem:eigenvalue}
For every $0<\kappa<1$,
\begin{equation}\label{eq:eigen-bound}\mu_1(\kappa)\leq\frac{4}{\log(1/\kappa)}.\end{equation}
In particular, $\mu_1(\kappa)\to0$ as $\kappa\downarrow0$.
\end{lem}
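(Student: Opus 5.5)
We bound $\mu_1(\kappa)$ through the Rayleigh quotient on $\Omega_\kappa$, using a test function that depends only on the Hopf angle $\theta$. In the coordinates \eqref{eq:hopf-metric}, the metric on $\Sph^3$ is $d\theta^2+\sin^2\theta\,d\omega^2+\cos^2\theta\,d\eta^2$. Hence for $\phi=\phi(\theta)$ we have $|\nabla_{\Sph^3}\phi|^2=\phi'(\theta)^2$, and
\[
\mu_1(\kappa)\leq \frac{\int_{\theta_\kappa}^{\pi/2}\phi'(\theta)^2\sin\theta\cos\theta\,d\theta}{\int_{\theta_\kappa}^{\pi/2}\phi(\theta)^2\sin\theta\cos\theta\,d\theta}
\]
for any Lipschitz $\phi$ with $\phi(\theta_\kappa)=0$. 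The factors $(2\pi)^2$ from $d\omega\,d\eta$ cancel.

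The natural choice is a logarithmic cutoff adapted to the weight $\sin\theta\cos\theta\approx\theta$ near $0$. This mimics the two-dimensional capacity of a point, because near $\theta=0$ the link looks like a 2D disc in the $x$-variable. We take
\[
\phi(\theta)=\min\Bigl\{1,\ \frac{\log(\tan\theta/\kappa)}{\log(1/\kappa)}\Bigr\}\quad\text{for }\theta\geq\theta_\kappa .
\]
This equals $0$ at $\tan\theta=\kappa$ and $1$ for $\tan\theta\geq1$, i.e.\ for $\theta\geq\pi/4$. Substituting $t=\tan\theta$ gives $d\theta=dt/(1+t^2)$, $\sin\theta\cos\theta=t/(1+t^2)$ and $\phi'(\theta)=(1+t^2)/(t\log(1/\kappa))$. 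The numerator therefore becomes
\[
\frac{1}{\log^2(1/\kappa)}\int_\kappa^1\frac{dt}{t}=\frac{1}{\log(1/\kappa)} .
\]
For the denominator, we drop everything except the region $\theta\in[\pi/4,\pi/2]$, where $\phi=1$:
\[
\int_{\pi/4}^{\pi/2}\sin\theta\cos\theta\,d\theta=\tfrac14 .
\]
Together these give $\mu_1(\kappa)\leq 4/\log(1/\kappa)$, which is exactly \eqref{eq:eigen-bound}.

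It remains to justify that this $\phi$ is admissible. It is Lipschitz on $\overline{\Omega_\kappa}$ and vanishes on $\partial\Omega_\kappa=\{\theta=\theta_\kappa\}$, so it lies in $H^1_0(\Omega_\kappa)$. The variational characterization of the first Dirichlet eigenvalue then applies. The limit $\mu_1(\kappa)\to0$ as $\kappa\downarrow0$ follows immediately from the bound.

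The only delicate points are the form of the Hopf metric and the exact constant. The constant $4$ comes precisely from the cutoff placed at $\tan\theta=1$ together with discarding the mass on $\theta<\pi/4$, so no finer estimate is needed.
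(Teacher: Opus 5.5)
Your proposal is correct and is essentially the paper's proof: the same logarithmic cutoff in the Hopf angle, the same evaluation of the Dirichlet energy, and the same lower bound on the $L^2$ norm by the mass on $[\pi/4,\pi/2]$. Your substitution $t=\tan\theta$ is equivalent to the paper's use of $\frac{d}{d\theta}\log\tan\theta=\frac{1}{\sin\theta\cos\theta}$, and the only cosmetic difference is that you cancel the common factor $4\pi^2$ from the angular integrals at the outset.
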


\begin{proof}
Consider the $H_0^1(\Omega_\kappa)$ test function
\[
\zeta_\kappa(\theta)=
\begin{cases}
\displaystyle\frac{\log(\tan\theta/\kappa)}{\log(1/\kappa)},
&\theta_\kappa\leq\theta\leq\pi/4,\\[3mm]
1,&\pi/4\leq\theta\leq\pi/2.
\end{cases}
\]
Since $\frac{d}{d\theta}\log(\tan\theta)=\frac{1}{\sin\theta\cos\theta},$
\eqref{eq:hopf-metric} gives (also using $|\nabla_{\Sph^3}\zeta_\kappa|^2 = |\zeta_\kappa'(\theta)|^2$),
\[\int_{\Omega_\kappa}|\nabla_{\Sph^3}\zeta_\kappa|^2dS=4\pi^2\int_{\theta_\kappa}^{\pi/4}\frac{1}{\log(1/\kappa)^2\sin^2\theta\cos^2\theta}\sin\theta\cos\theta\,d\theta=\frac{4\pi^2}{\log(1/\kappa)}.\]
On the other hand,
\begin{align*}
\int_{\Omega_\kappa}\zeta_\kappa^2dS
&\geq4\pi^2\int_{\pi/4}^{\pi/2}\sin\theta\cos\theta\,d\theta
=\pi^2.
\end{align*}
The Rayleigh quotient yields \eqref{eq:eigen-bound}.
\end{proof}

The map $\kappa\mapsto\mu_1(\kappa)$ is continuous and strictly increasing on $(0,1]$.  Indeed, the domains $\Omega_\kappa$ depend smoothly on $\kappa$, which gives continuity of the first eigenvalue.  If $0<\kappa_1<\kappa_2\leq1$, then $\Omega_{\kappa_2}\subsetneq\Omega_{\kappa_1}$, and strict domain monotonicity for the first Dirichlet eigenvalue gives $\mu_1(\kappa_1)<\mu_1(\kappa_2)$.

At the other endpoint, $\mu_1(1)=8$.  To see this directly, consider
\[P(x,y):=|x|^2-|y|^2,\]
which is harmonic and 2-homogeneous in $\R^4$, so in particular it has eigenvalue $2(2+2) = 8$ on the restriction to $\Sph^3$ for the operator $-\Delta_{\Sph^3}$. Moreover, $P>0$ in $\Omega_1=\{|x|>|y|\}\cap\Sph^3$ and $P=0$ on $\partial\Omega_1$, so it is the first Dirichlet eigenfunction on $\Omega_1$, and hence $\mu_1(1)=8$.

Since $0<\sigma(\sigma+2)<3<8$, the preceding lemma, continuity, and the intermediate value theorem give a unique $\kappa(\sigma)\in(0,1)$ such that
\[\mu_1\bigl(\kappa(\sigma)\bigr)=\sigma(\sigma+2).\]

Let $\varphi_{\kappa(\sigma)}$ be the positive first Dirichlet eigenfunction, normalized in $L^2(\Omega_{\kappa(\sigma)})$:
\[-\Delta_{\Sph^3}\varphi_{\kappa(\sigma)}=\mu_1\bigl(\kappa(\sigma)\bigr)\varphi_{\kappa(\sigma)}\quad\text{in }\Omega_{\kappa(\sigma)}, \qquad \varphi_{\kappa(\sigma)}=0\quad\text{on }\partial\Omega_{\kappa(\sigma)}.\]
The first eigenvalue is simple, and $\Omega_{\kappa(\sigma)}$ is invariant under $O(2)\times O(2)$.  Hence, $\varphi_{\kappa(\sigma)}$ is invariant under that group and depends only on the Hopf angle $\theta$.  By the choice of $\kappa(\sigma)$, this is the spherical eigenfunction corresponding to homogeneity $\sigma$ in $\R^4$.
Define
\[W(z):=|z|^\sigma\varphi_{\kappa(\sigma)}\left(\frac{z}{|z|}\right).\]
Then $W$ is $\sigma$-homogeneous and
\[\Delta W=0\quad\text{in }D_{\kappa(\sigma)}, \qquad W>0\quad\text{in }D_{\kappa(\sigma)}, \qquad W=0\quad\text{on }\partial D_{\kappa(\sigma)}.\]
For the remainder of the proof, we abbreviate $\kappa(\sigma)$ simply by $\kappa$.
\subsection{The transmission profile}\label{subsec:transmission-profile}

We now place two copies of $W$ in complementary cones. Let
\[\alpha=\alpha(\sigma):=\kappa(\sigma)^{-2}>1\]
and define
\[
A_+:=
\begin{pmatrix}
\alpha {\rm Id}_2&0\\
0&{\rm Id}_2
\end{pmatrix},
\qquad
A_-:=
\begin{pmatrix}
{\rm Id}_2&0\\
0&\alpha {\rm Id}_2
\end{pmatrix}.
\]
As $\sigma\downarrow0$, one has $\kappa(\sigma)\downarrow0$ and hence $\alpha(\sigma)\uparrow\infty$ (in particular, ellipticity degenerates as $\sigma\downarrow 0$). Let us define the linear elliptic operators with constant coefficients:
\[
\begin{aligned}
L_+ w&:={\rm tr}\,(A_+D^2w)={\rm div}(A_+\nabla w) = \alpha\Delta_x w+\Delta_y w,\\
L_- w&:={\rm tr}\,(A_-D^2w)={\rm div}(A_-\nabla w) = \Delta_x w+\alpha\Delta_y w.
\end{aligned}
\]
Consider the complementary cones
\[C_+:=\{|x|>|y|\}, \qquad C_-:=\{|y|>|x|\},\]
and their interface
\[\Gamma:=\{|x|=|y|\}.\]

Define the zero-extended functions
\[
U_+(x,y):=
\begin{cases}
W(\kappa x,y),&(x,y)\in C_+,\\
0,&(x,y)\notin C_+,
\end{cases}
\]
and
\[
U_-(x,y):=
\begin{cases}
W(\kappa y,x),&(x,y)\in C_-,\\
0,&(x,y)\notin C_-,
\end{cases}
\]
i.e., $U_-(x, y) = U_+(y, x)$. Both are nonnegative and $\sigma$-homogeneous.

\begin{lem}\label{lem:phase-equations}
The functions $U_+$ and $U_-$ satisfy
\[L_+U_+=0\quad\text{in }C_+, \qquad L_-U_-=0\quad\text{in }C_-.\]
\end{lem}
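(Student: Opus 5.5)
The plan is a direct chain-rule computation: a linear change of variables turns $L_+$ into the Laplacian and maps $C_+$ onto the cone $D_\kappa$, where $W$ is harmonic by construction. The statement for $U_-$ then follows from the one for $U_+$ by exchanging the two $\R^2$ factors.

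\textbf{Step 1: the cone.} Consider the linear map $T(x,y):=(\kappa x,y)$, so that $U_+=W\circ T$ on $C_+$. I first check that $T$ maps $C_+$ into $D_\kappa$. If $(x,y)\in C_+$, then $|x|>|y|$, hence $|\kappa x|=\kappa|x|>\kappa|y|$, so $T(x,y)\in D_\kappa$. In fact $T(C_+)=D_\kappa$ exactly. On $D_\kappa$ we know that $W$ is smooth (being $|z|^\sigma$ times the smooth interior eigenfunction) and that $\Delta W=0$. Hence $U_+$ is smooth in the open set $C_+$.

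\textbf{Step 2: the chain rule.} For $(x,y)\in C_+$ we have
\[
D_x^2U_+(x,y)=\kappa^2(D_x^2W)(\kappa x,y),\qquad D_y^2U_+(x,y)=(D_y^2W)(\kappa x,y).
\]
Taking traces gives $\Delta_xU_+=\kappa^2(\Delta_xW)\circ T$ and $\Delta_yU_+=(\Delta_yW)\circ T$. Since $\alpha=\kappa^{-2}$, it follows that
\[
L_+U_+=\alpha\Delta_xU_++\Delta_yU_+=(\Delta_xW+\Delta_yW)\circ T=(\Delta W)\circ T=0\quad\text{in }C_+.
\]

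\textbf{Step 3: the symmetry.} Let $S(x,y)=(y,x)$. Then $U_-=U_+\circ S$, and $S$ maps $C_-$ onto $C_+$. Moreover $S$ interchanges $\Delta_x$ and $\Delta_y$, so it intertwines the two operators: $L_-(w\circ S)=(L_+w)\circ S$. Applying this to $w=U_+$ yields $L_-U_-=0$ in $C_-$.

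There is no real obstacle here. The only point needing care is that everything is pointwise in the open cones, so the interface $\Gamma$ plays no role. Its distributional contribution is exactly what the subsequent lemmas must handle.
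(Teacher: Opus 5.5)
Your proof is correct and takes the same approach as the paper's: the chain rule with $T(x,y)=(\kappa x,y)$ and $\alpha\kappa^2=1$ reduces $L_+U_+$ to $(\Delta W)\circ T=0$. Your use of the swap $S$, which intertwines $L_+$ and $L_-$, is just a tidier way of carrying out the paper's remark that ``the same argument applies to $C_-$.''
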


\begin{proof}
Inside $C_+$, write $X=\kappa x$ and $Y=y$.  Since $\alpha\kappa^2=1$,
\[L_+U_+=\alpha\Delta_xW(\kappa x,y)+\Delta_yW(\kappa x,y)=\alpha\kappa^2\Delta_XW+\Delta_YW=\Delta W=0.\]
We used here that if $(x, y) \in C_+$, then $(\kappa x, y)\in D_\kappa$. The same argument applies to $C_-$, since $(x, y)\in C_-$ implies $(\kappa y, x) \in D_\kappa$.
\end{proof}

The essential point is equality of the conormal source measures of the two zero extensions.  This is the same distributional compatibility that appears, in the Bellman-to-transmission direction, in \cite[Lemmas~2.1--2.2]{AnderssonMikayelyan2013} and \cite[Section~4]{CaffarelliDeSilvaSavin2018}.

\begin{prop}\label{prop:flux}
We have
\[L_+U_+=L_-U_- \qquad\text{distributionally in $\R^4$}.\]
\end{prop}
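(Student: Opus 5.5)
The plan is to compute both distributions $L_+U_+$ and $L_-U_-$ explicitly as nonnegative measures carried by the interface $\Gamma$. Then I would check that the reflection $S(x,y):=(y,x)$ carries one onto the other, and that the common measure is $S$-invariant. I will use three facts: $U_-=U_+\circ S$; $S$ maps $C_+$ onto $C_-$ and $\Gamma$ onto itself; and $S$ conjugates $A_+$ into $A_-$, so that $L_-(w\circ S)=(L_+w)\circ S$ for smooth $w$.

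\textbf{Step 1 (Green's formula away from the origin).} Fix $\phi\in C^\infty_c(\R^4)$. Away from the origin, $\Gamma$ is a smooth hypersurface and $\partial D_\kappa\setminus\{0\}$ is smooth. Since $W$ is smooth up to $\partial D_\kappa\setminus\{0\}$ by elliptic boundary regularity, $U_+|_{\overline{C_+}}$ is smooth up to $\Gamma\setminus\{0\}$. I would integrate over $C_+\setminus B_\eps$ and use the symmetry of $A_+$. Lemma~\ref{lem:phase-equations} gives $L_+U_+=0$ in $C_+$, and $U_+=0$ on $\Gamma$. Hence
\[
\int_{\R^4}U_+\,L_+\phi\,dz=\lim_{\eps\downarrow0}\Bigl(-\int_{\Gamma\setminus B_\eps}\phi\,A_+\nabla U_+\cdot\nu_+\,d\mathcal H^3+R_\eps\Bigr),
\]
where $\nu_+$ is the outer normal of $C_+$. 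The term $R_\eps$ collects the contributions on $\partial B_\eps\cap C_+$. Homogeneity gives $|U_+|\lesssim\eps^\sigma$ and $|\nabla U_+|\lesssim\eps^{\sigma-1}$ there, so $|R_\eps|\lesssim(\eps^{\sigma}+\eps^{\sigma-1})\eps^3\to0$.

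Because $U_+>0$ in $C_+$ and $U_+=0$ on $\Gamma$, we have $\nabla U_+=-|\nabla U_+|\nu_+$ on $\Gamma\setminus\{0\}$. Therefore
\[
L_+U_+=g_+\,\mathcal H^3\res\Gamma,\qquad g_+:=|\nabla U_+|\,(A_+\nu_+\cdot\nu_+)\ge0.
\]
The density satisfies $g_+(z)\lesssim|z|^{\sigma-1}$, which is locally integrable on the $3$-dimensional cone $\Gamma$, so the limit defines a Radon measure. The same computation gives $L_-U_-=g_-\,\mathcal H^3\res\Gamma$ with $g_-:=|\nabla U_-|(A_-\nu_-\cdot\nu_-)$.

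\textbf{Step 2 (symmetry).} Since $S$ is an isometry, $\nu_-=DS\,\nu_+\circ S$, and $A_-=SA_+S$, the chain rule gives $g_-=g_+\circ S$ on $\Gamma$. It then remains to show $g_+\circ S=g_+$.

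\textbf{Step 3 (invariance of the density).} This is the key point and, I expect, the only real obstacle. By simplicity of the first eigenvalue, $\varphi_\kappa$ depends only on the Hopf angle $\theta$. Hence $W$, and therefore $U_+$, is a function of $(|x|,|y|)$ alone, and so is invariant under $O(2)\times O(2)$. The quantities $\nu_+$, $A_+$ and the moduli entering $g_+$ are equivariant under this group, so $g_+$ on $\Gamma$ depends only on $(|x|,|y|)=(t,t)$. By $\sigma$-homogeneity, $g_+=c\,t^{\sigma-1}$ for a constant $c$, which is explicitly computable as $c=\sqrt2\,|\partial_r f|(\alpha/2+1/2)$ with $U_+=f(|x|,|y|)$.

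Since $S$ preserves $t=|x|=|y|$, we get $g_+\circ S=g_+$. Combining with Step 2 gives $g_-=g_+$, and hence $L_+U_+=L_-U_-$ in $\mathcal D'(\R^4)$.
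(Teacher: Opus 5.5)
Your proposal is correct and follows essentially the paper's route: identify $L_\pm U_\pm$ as nonnegative conormal-derivative densities on $\Gamma$ and use the $O(2)\times O(2)$ invariance of $W$ to see that they coincide. The differences are cosmetic. The paper computes both densities explicitly as $\frac{\alpha+1}{\sqrt2}d(t)$, whereas you get $g_-=g_+$ from the swap $S$ together with the fact that $g_+$ depends only on $t=|x|=|y|$. You also rule out mass at the origin by the explicit $\eps$-ball excision (which is more careful than the paper's brief $W^{1,2}_{\rm loc}$ remark).
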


\begin{proof}
Notice from Lemma~\ref{lem:phase-equations} that the only possible contributions come from the boundary $\{|x| = |y|\}$. In fact, each $L_\pm U_\pm$ is a locally finite measure concentrated on $\{|x| = |y|\}$ and, since $U_\pm \ge 0$, has a positive density given by the corresponding conormal derivative on $\{|x| = |y|\}$. That is,
\[L_+ U_+ = |A_+ \nabla U_+ \cdot \nu_+|\mathcal{H}^{3}\res \{|x| = |y|\},\]
where $\nu_+$ is the outward unit normal to $C_+$ at $\Gamma$, given by $\nu_+ = \frac{1}{\sqrt{2}|x|}\left(-x, y\right)$.

Similarly, we have
\[L_- U_- = |A_- \nabla U_- \cdot \nu_-|\mathcal{H}^{3}\res \{|x| = |y|\},\]
with $\nu_- = -\nu_+$.

Now, by the separate rotational symmetry for $W$ in $x$ and $y$, there is a function $\Psi$ such that
\[U_+(x,y)=\Psi(r,s), \qquad r:=|x|,\quad s:=|y|,\]
in $C_+$ (and similarly, $U_-(x, y) = \Psi (s, r)$).  On the interface, for $t>0$, $\Psi(t,t)=0$.  Differentiating along the ray $t\mapsto(t,t)$ yields
\[\Psi_r(t,t)+\Psi_s(t,t)=0.\]
Let
\[d(t):=\Psi_r(t,t)=-\Psi_s(t,t),\]
so that
\[\nabla U_+=\left(d(t)\frac{x}{t},-d(t)\frac{y}{t}\right) \qquad\text{on }\Gamma\setminus\{0\}.\]
Consequently,
\[-A_+\nabla U_+\cdot\nu_+=-\left(\alpha d(t)\frac{x}{t},-d(t)\frac{y}{t}\right)\cdot\frac1{\sqrt2}\left(-\frac{x}{t},\frac{y}{t}\right)=\frac{\alpha+1}{\sqrt2}\,d(t).\]

Since $U_-(x,y)=\Psi(s,r)$ in $C_-$, one similarly has
\[-A_-\nabla U_-\cdot\nu_-=-\left(-d(t)\frac{x}{t},\alpha d(t)\frac{y}{t}\right)\cdot\frac1{\sqrt2}\left(\frac{x}{t},-\frac{y}{t}\right)=\frac{\alpha+1}{\sqrt2}\,d(t).\]
Thus the two conormal densities agree pointwise on $\Gamma\setminus\{0\}$, and since $U_\pm$ are $\sigma$-homogeneous, $U_\pm\in W^{1,2}_{\rm loc}(\R^4)$ and no point mass is created at the origin. This completes the proof.
\end{proof}

\subsection{Proof of the main result}\label{subsec:proof-main}

Define
\[q:=U_+-U_-.\]
Since the supports of $U_+$ and $U_-$ occupy complementary cones,
\[q^+=U_+, \qquad q^-=U_-,\]
where $q^+=\max\{q,0\}$ and $q^-=\max\{-q,0\}$.  Proposition \ref{prop:flux} becomes
\begin{equation}\label{eq:compatibility}L_+q^+=L_-q^- \qquad\text{distributionally in $\R^4$}.\end{equation}
Equivalently, we have a broken quasilinear PDE
\[\diver\bigl(A_+\nabla q^+-A_-\nabla q^-\bigr)=0.\]

\begin{cor}\label{cor:broken-conductivity}
For every $\sigma\in(0,1)$, there exist constant uniformly elliptic, nonproportional matrices $A_+,A_-\in\Sym^4$ and a nonzero function
\[q\in C^\sigma_{\mathrm{loc}}(\R^4)\cap W^{1,2}_{\mathrm{loc}}(\R^4),\]
homogeneous of degree $\sigma$, such that
\[\diver\bigl(A_+\nabla q^+-A_-\nabla q^-\bigr)=0 \qquad\text{in }\R^4.\]
Moreover,
\[\{q=0\}=\Gamma=\{|x|=|y|\}, \qquad q\in C^1(\R^4\setminus\{0\}),\]
and $q$ is not $C^\beta$ at the origin for any $\beta>\sigma$.
\end{cor}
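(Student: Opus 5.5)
The plan is to take $A_\pm$ as in the transmission-profile subsection and set $q:=U_+-U_-$. The divergence-form identity is then just \eqref{eq:compatibility} rewritten using $L_\pm w=\diver(A_\pm\nabla w)$, and \eqref{eq:compatibility} is Proposition~\ref{prop:flux}. The matrices $A_\pm$ are uniformly elliptic with eigenvalues in $[1,\alpha]$, and they are not proportional since $\alpha>1$. The function $q$ is $\sigma$-homogeneous because $U_\pm$ are. It is nonzero because $W>0$ in $D_\kappa$, so $U_+>0$ in $C_+$. It remains to verify the zero set and the regularity claims.

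For the zero set, note that $U_+>0$ exactly on $C_+$, because $(x,y)\in C_+$ implies $(\kappa x,y)\in D_\kappa$. Likewise $U_->0$ exactly on $C_-$, and both vanish on $\Gamma$. Hence $\{q=0\}=\Gamma$, where the origin lies in $\Gamma$.

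For the $C^1$ regularity away from the origin, I will use that $\varphi_\kappa$ is a function of $\theta$ alone and solves a regular ODE on $[\theta_\kappa,\pi/2]$. Indeed, $\partial\Omega_\kappa=\{\theta=\theta_\kappa\}$ is a smooth torus, so elliptic boundary regularity gives $\varphi_\kappa\in C^\infty(\overline{\Omega_\kappa})$. Consequently $W$ is smooth up to $\partial D_\kappa\setminus\{0\}$, and $U_+$ is smooth on $\overline{C_+}\setminus\{0\}$; by symmetry the same holds for $U_-$ on $\overline{C_-}\setminus\{0\}$.

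The step I expect to be the main obstacle is that $q$ is $C^1$ across $\Gamma$, because the gradients of the two pieces must match there. On $\Gamma\setminus\{0\}$ the computation in the proof of Proposition~\ref{prop:flux} gives
\[\nabla U_+=d(t)\,(x/t,-y/t).\]
Since $U_-(x,y)=\Psi(s,r)$, the gradient of $U_-$ on $\Gamma$ is $(\Psi_s x/t,\Psi_r y/t)=d(t)(-x/t,y/t)$. Therefore
\[\nabla q=\nabla U_+-\nabla U_-=2d(t)\,(x/t,-y/t)\]
from both sides. More precisely, the one-sided limits of $\nabla q$ from $C_+$ and from $C_-$ are $\nabla U_+$ and $-\nabla U_-$ respectively, and these coincide. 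So $\nabla q$ is continuous across $\Gamma$, and $q\in C^1(\R^4\setminus\{0\})$.

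For the Hölder bounds I argue exactly as in the proof of Lemma~\ref{lem:lifting}. Write $q=|z|^\sigma g(z/|z|)$ with $g$ Lipschitz on $\Sph^3$; this holds since $q$ is $C^1$ near $\Sph^3$. Splitting into the cases $|z-w|\geq\frac12\max\{|z|,|w|\}$ and the complementary one, where the gradient bound $|\nabla q|\lesssim|z|^{\sigma-1}$ is used, gives $|q(z)-q(w)|\leq C|z-w|^\sigma$, so $q\in C^\sigma_{\rm loc}$. Since $\sigma>0$, we have $|\nabla q|\in L^2_{\rm loc}$, because $|z|^{2\sigma-2}$ is integrable in $\R^4$; hence $q\in W^{1,2}_{\rm loc}$.

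Finally, suppose $q$ were $C^\beta$ at the origin for some $\beta>\sigma$. Then $|q(z)|=|q(z)-q(0)|\leq C|z|^\beta$. On the other hand, $q(z_0)\neq0$ at some point with $|z_0|=1$, and homogeneity gives $|q(tz_0)|=t^\sigma|q(z_0)|$. As $t\downarrow0$ this contradicts the bound $Ct^\beta$, since $t^{\sigma-\beta}\to\infty$.
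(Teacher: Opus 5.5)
Your proposal is correct and follows the paper's own argument. You take $q=U_+-U_-$ with the matrices $A_\pm$ of Section~\ref{subsec:transmission-profile} and read the divergence identity off Proposition~\ref{prop:flux}. You then get $C^1$ regularity away from the origin from the gradient matching in that proposition's proof, and your homogeneity arguments fill in the H\"older, $W^{1,2}_{\rm loc}$ and non-$C^\beta$ claims that the paper states only briefly.

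One small slip: the common one-sided limit of $\nabla q$ on $\Gamma\setminus\{0\}$ is $d(t)\,(x/t,-y/t)$, not $2d(t)\,(x/t,-y/t)$, because on each side only one of $U_\pm$ is nonzero. Your own ``more precisely'' sentence already states this correctly.
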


\begin{proof}
Take the matrices $A_+,A_-$ and the function $q=U_+-U_-$ constructed above with $q>0$ in $C_+$, $q<0$ in $C_-$, and $q=0$ on $\Gamma$. The regularity of the angular profiles up to the boundary, together with homogeneity and zero extension, gives $q\in C^\sigma_{\mathrm{loc}}(\R^4)\cap W^{1,2}_{\mathrm{loc}}(\R^4)$. The matching of normal derivatives in the proof of Proposition~\ref{prop:flux} gives the $C^1$ regularity outside of the origin.
\end{proof}

We can now prove the main theorem.

\begin{proof}[Proof of Theorem \ref{thm:main}]
The zero extensions $U_\pm$ have Lipschitz angular profiles, since the spherical eigenfunction is smooth up to the boundary and vanishes there.  The homogeneous-extension therefore implies that $q^+=U_+$ and $q^-=U_-$ belong to $C^\sigma_{\rm loc}(\R^4)$, including at the origin. Apply Lemma \ref{lem:lifting} with
\[L_1=L_-, \qquad L_2=L_+,\qquad q_1 = q^+,\qquad q_2 = q^-,\]
since \eqref{eq:compatibility} holds.  We obtain a $(2+\sigma)$-homogeneous function
$v\in C^{2,\sigma}_{\mathrm{loc}}(\R^4)$
such that
\[L_-v=q^+, \qquad L_+v=q^-.\]
Since $q^+,q^-\geq0$ and $q^+q^-\equiv0$,
\[\min\{L_-v,L_+v\}=0 \qquad\text{in }\R^4,\]
so that letting $u = -v$ gives the desired result with $A_1 = A_-$ and $A_2 = A_+$.

Since $\sigma\in(0,1)$ was arbitrary, the construction applies to every prescribed exponent in this interval.
\end{proof}

\section{Proof of Theorem~\ref{thm:three-operators}}\label{sec:three-operators}

The proof is an explicit construction based on three rotated sectors.

\begin{proof}[Proof of Theorem~\ref{thm:three-operators}]
We use the identity
\[\arctan 2+\arctan 3=\frac{3\pi}{4}.\]
Let $R_\theta$ denote counterclockwise rotation through the angle $\theta$, and define
\[
A_0:=
\begin{pmatrix}
3&0\\
0&1
\end{pmatrix},
\qquad
A_j:=R_{2j\pi/3}A_0R_{2j\pi/3}^{T},
\qquad j=1,2,
\]
so that each matrix has eigenvalues 1 and 3. Explicitly,
\begin{equation}\label{eq:three-matrices-explicit}
A_1=
\frac12 \begin{pmatrix}
3&-\sqrt3\\[1mm]
-\sqrt3&5
\end{pmatrix},
\qquad
A_2=
\frac12 \begin{pmatrix}
3&\sqrt3\\[1mm]
\sqrt3&5
\end{pmatrix}.
\end{equation}

Consider
\[\Gamma_0:=\bigl\{(r\cos\theta,r\sin\theta):r>0,\ |\theta|\leq\pi/3\bigr\}.\]
For $x=(x_1,x_2)\in\Gamma_0$, set
\[z:=\frac{x_1}{\sqrt3}+ix_2, \qquad u_0(x):={\rm Re}\,(z^{2+\sigma_*}).\]
Notice $\Gamma_0\subset \{x_1 > 0\}$, so the branch $z^{2+\sigma_*}$ is well defined.  With $X=x_1/\sqrt3$ and $Y=x_2$, the function ${\rm Re}\,(X+iY)^{2+\sigma_*}$ is harmonic.  Hence
\begin{equation}\label{eq:u0-active}{\rm tr}\,(A_0D^2u_0)=3(u_0)_{11}+(u_0)_{22}=0 \qquad\text{in }\Gamma_0.\end{equation}

We next check that $u_0$ has zero Euclidean normal derivative on both boundary rays.  On the upper ray $\theta=\pi/3$, write $z=\rho e^{i\arctan 3}$ and let
\[\nu=(-\sin(\pi/3),\cos(\pi/3))=(-\sqrt{3}/2,1/2),\]
be the outward unit normal to $\Gamma_0$.  Direct differentiation gives
\begin{align*}
\partial_{x_1}u_0
&=\frac{2+\sigma_*}{\sqrt3}\rho^{1+\sigma_*}
  \cos\bigl((1+\sigma_*)\arctan 3\bigr),\\
\partial_{x_2}u_0
&=-(2+\sigma_*)\rho^{1+\sigma_*}
  \sin\bigl((1+\sigma_*)\arctan 3\bigr).
\end{align*}
We obtain, using $(1+\sigma_*)\arctan 3=\arctan 3+\arctan 2=\frac{3\pi}{4}$,
\begin{align*}
\partial_\nu u_0
&=-\frac{2+\sigma_*}{2}\rho^{1+\sigma_*}
\left[
\cos\bigl((1+\sigma_*)\arctan 3\bigr)
+
\sin\bigl((1+\sigma_*)\arctan 3\bigr)
\right]
=0.
\end{align*}
The same conclusion holds on the lower ray by symmetry.

Let
\[\Gamma_j:=R_{2j\pi/3}\Gamma_0, \qquad j=0,1,2.\]
These sectors cover $\R^2\setminus\{0\}$ and have disjoint interiors.  Define
\[u(x):=u_0(R_{-2j\pi/3}x) \qquad\text{for }x\in\Gamma_j.\]
The matching argument below shows that this definition is independent of the choice of $j$ on the common boundary rays.

To see the matching across the boundary rays, write
\[u_0(r,\theta)=r^{2+\sigma_*}\phi(\theta), \qquad |\theta|\leq\pi/3.\]
The function $\phi$ is smooth and even, with $\phi'(\pm\pi/3)=0$. Evenness matches $\phi$ and $\phi''$ at the endpoints, while the vanishing normal derivative matches $\phi'$.  Its extension with period $2\pi/3$ is therefore $C^{2,1}$.  In particular, by $(2+\sigma_*)$-homogeneity we have $u\in C^{2,\sigma_*}_{\mathrm{loc}}(\R^2)$.

It remains to check the equation in the interior of $\Gamma_0$.  Write
\[z=\rho e^{i\Theta}, \qquad |\Theta|<\arctan 3, \qquad \psi:=\sigma_*\Theta.\]
For $f(z)=z^{2+\sigma_*}$ we have
$f''(z)=(2+\sigma_*)(1+\sigma_*)\rho^{\sigma_*}e^{i\psi}$.
Taking its real and imaginary parts gives the Hessian in the $(X,Y)$ variables; the change $X=x_1/\sqrt3$, $Y=x_2$ contributes the factors $1/3$ and $1/\sqrt3$ in the first and mixed entries, respectively.  Thus
\[D^2u_0=(2+\sigma_*)(1+\sigma_*)\rho^{\sigma_*}
\begin{pmatrix}
\frac13\cos\psi&-\frac1{\sqrt3}\sin\psi\\[1mm]
-\frac1{\sqrt3}\sin\psi&-\cos\psi
\end{pmatrix}.\]
Using \eqref{eq:three-matrices-explicit}, we find
\begin{align}
{\rm tr}\,(A_1D^2u_0)
&=(2+\sigma_*)(1+\sigma_*)\rho^{\sigma_*}
  \bigl(-2\cos\psi+\sin\psi\bigr),
\label{eq:three-trace-one}\\
{\rm tr}\,(A_2D^2u_0)
&=(2+\sigma_*)(1+\sigma_*)\rho^{\sigma_*}
  \bigl(-2\cos\psi-\sin\psi\bigr).
\label{eq:three-trace-two}
\end{align}
Now $|\psi|\leq\sigma_*\arctan 3=\arctan 2$.  Hence
\[|\sin\psi|\leq2\cos\psi,\]
so both quantities in \eqref{eq:three-trace-one}--\eqref{eq:three-trace-two} are nonpositive.  Together with \eqref{eq:u0-active}, this proves
\[\max_{j=0,1,2}{\rm tr}\,(A_jD^2u_0)=0 \qquad\text{in }\Gamma_0.\]
Rotation gives the same identity in the other sectors.  Since the Hessian is continuous across the boundary rays and vanishes at the origin, the equation holds classically in all of $\R^2$.  Finally, $u(1,0)=3^{-(2+\sigma_*)/2}>0$, so $u$ is nonzero.  Along the positive $x_1$-axis the Hessian formula above is a nonzero constant matrix times $r^{\sigma_*}$; since $\sigma_*<1$, $D^2u$ is not Lipschitz at the origin.

\end{proof}

\end{document}